\newtheorem{theorem}{Theorem}[section]
\newtheorem{lemma}{Lemma}[section]
\newtheorem{definition}{Definition}[section]
\newtheorem{corollary}{Corollary}[section]
\numberwithin{equation}{section}
\begin{document}

\title{
Elevator Optimization: \\
Application of Spatial Process and Gibbs Random Field Approaches for Dumbwaiter Modeling and Multi-Dumbwaiter Systems
}
\author{Zheng Cao$^1$, Benjamin Lu Davis$^2$, Wanchaloem Wunkaew$^3$, Xinyu Chang$^4$
\and University of Washington, Seattle, USA
\and Department of Mathematics
\and zc68@uw.edu $^1$
\and Department of ECE, Applied Math, Statistics
\and bldbld@uw.edu$^2$
\and Department of Applied Mathematics
\and leegarap@uw.edu$^3$
\and Department of Mathematics, Economics
\and xchang23@uw.edu$^4$
}
\date{}
\maketitle

\begin{abstract}
This research investigates analytical and quantitative methods for simulating elevator optimizations. To maximize overall elevator usage, we concentrate on creating a “multiple-user positive-sum system” that is inspired by agent-based game theory. We define and create basic “Dumbwaiter” models by attempting both the Spatial Process Approach and the Gibbs Random Field Approach. These two mathematical techniques approach the problem from different points of view: the spatial process can give an analytical solution in continuous space, and the Gibbs Random Field provides a discrete framework to flexibly model the problem on a computer. Starting from the simplest case, we target the assumptions to provide concrete solutions to the models and develop a “Multi-Dumbwaiter System.” This paper examines, evaluates, and proves the ultimate success of such implemented strategies to design the basic elevator's optimal policy; consequently, not only do we believe in the results' practicality for industry, but also their potential for application.

\end{abstract}

\vspace{0.5em}

\textbf{\text{Keywords:}}

Stochastic Optimal Control, Spatial Process, Gibbs Random Field, Agent-Based Game Theory, Operations Research, Markov Chain
\newpage

\tableofcontents

\newpage

\section{Introduction}

\hspace{1em}
“The Model Thinker”\cite{TMT} outlines the methodology for taking this language called “math,” which we humans invented and implemented in a way to describe reality in a manner that draws conclusive results to yield insightful wisdom to guide our future decision-making processes. We perform that analysis here for the elevator's journey of delivering passengers to help consider what the Bellman-Ford optimal policy should look like. 

\par
We have attempted to approach elevator optimization models in two ways: using spatial processing and Gibbs Random Field. This project is studied simultaneously as another research team completes “Application of Deep Q Learning with Simulation Results for Elevator Optimization” to apply machine learning and simulation to optimize elevator applications.\cite{ADQLSREO} Through our programming research team's work, we are applying machine learning techniques to optimize the elevator's allocations. Both projects produce independent results, but in future research, a better optimization result may be accomplished through a combination of both programming and math research results.

\section{Motivation}
\hspace{1em}
Inspired by Game Theory, this elevator optimization seeks to develop a “multiple-user positive-sum system” in addition to the “two-person zero-sum game” proved by John von Neumann. Here,
\begin{definition}
A “multiple-user positive-sum system” is a system in which a positive sum is created at the cost of a few users.
\end{definition}
In our case, we are attempting to mathematically improve the overall performance and efficiency of elevators by applying the Spatial Process and Gibbs Random Field Approaches. This procedure is quite analogous to agent-based modeling of complex adaptive systems.

Cao was really miffed at the excruciatingly long wait times for the esteemed University of Washington's Lander Hall dorms' inefficient elevator system, and thus vowed passionate commitment to devising a more efficient system using stochastic optimization and operations research. Increased elevator efficiency will aid in serving more customers and floor-traversing patrons, increasing the diversity, equity, inclusion, and disability accessibility of the building in question. The idea of combining programming and math modeling from parallel research is motivated by “Application of Convolutional Neural Networks with Quasi-Reversibility Method Results for Option Forecasting.”
Before shoving brute-force computation and deep machine learning neural networks onto the mathematical programming problem, we perform some preliminary, simple, underlying, first-level pencil-and-paper analysis to get a sense of the layout and lay the groundwork for how we should expect the system to behave.

\section{Assumptions}
\hspace{1em}
To begin with, three assumptions are set to develop the “Dumbwaiter” model for elevator optimization:

\begin{enumerate}
\item The model is restricted to one elevator in one building.

\item People do not ride the elevator directly to the same floor they're already on.

\item The elevator has unlimited human capacity (for all practical purposes).
\end{enumerate}

Notice that all three of the aforementioned assumptions are applicable for both approaches below. However, the Gibbs Random Field Approach requires some additional conditions or presumptive constraints.

Even though some of the above assumptions may sound unrealistic, e.g., there exists no such elevator with infinite capacity, nor any such building with a single elevator as the sole mode of transport, it is these purposefully implemented predetermined rules and slight limitations that help us to effectively utilize a rigorous pure-math approach to simplify the complex elevator usage model in the real world and its subsequent optimization procedure.

\section{Math Modeling - The “Dumbwaiter” Models}
\hspace{1em}
For those not familiar, a dumbwaiter is a small freight elevator specifically meant for delivering food to different floors directly from the kitchen, usually in fancy old buildings. In this paper, a “Dumbwaiter” is defined as the following:

\begin{definition}
A “Dumbwaiter System” is an elevator system that is the only transport medium and has unlimited human capacity, which exhibits the worst case handling of transporting human beings between floors of a certain building.
\end{definition}

The “dumbwaiter” models, hence the pun on the name, are meant to kind of characterize the worst-case scenario or examine delivery methodologies that completely lack any sort of intelligent planning. If the “dumbwaiter” lacks intelligence, it will clearly exhibit inefficiency in its trajectory, and thus performance metrics on the “dumbwaiter” models can be used as a direct benchmark for comparison to estimate and quantify any increased efficiency from these base-case scenarios using more intelligent elevator routing schemes. This is analogous to quantifying “information rent” from contract theory in direct comparison to the public-knowledge first-best-contract full-information benchmark case.

With these definitions and assumptions, we now are able to apply the Spatial Process Approach and Gibbs Random Field Approach to solve the “Dumbwaiter mystery.”

\section{Spatial Process Approach}
\hspace{1em}
The simple “dumbwaiter” is modeled as a spatial process.
The “Dumbwaiter,” hence the name, unintelligently directly visits each call in succession with no prior planning.
This is the informal name for this model, as it’s an amusing pun alluding to how old western buildings in the occident contained a miniature freight-elevator (with just enough space for a silver platter receptacle,) literally actually called a “dumbwaiter,” specifically for easy access to the basement chef kitchen to facilitate hoi polloi attendant/ servant/ butler/ maid delivery of food meals directly upstairs onward to the regally royal Prince/ King/ Lord/ Duke/ Baron/ Lady’s Hotel/ Palace master guest room.
This “dumbwaiter” model (simple enough solvable by hand) serves as a theoretical mundanely grounded baseline metric for direct comparison with future methods, as the rest of the project goes
(on average, dumbwaiter traverses 1/3 building each call under that simple proposed model).

\subsection{Notations}

\begin{enumerate}
\item $X_{i}$ - the floor the elevator receives a call from the $i$th group of people.
\item $Y_{i}$ - the destination floor of the elevator pressed by the $i$th group of people.
\item $T$ - total travel time of the elevator.
\item $L$ - total traversed distance of the elevator.
\item $Z$ - floor distances between the next movement of the elevator and previous location.
\item $R$ - absolute value of the floor distances of the two adjacent elevator movements.
\end{enumerate}

\subsection{Baseline Model}
\hspace{1em}
Each floor of “Hilbert's Hotel” is indexed in ascending order along the “Hotelling-Line” from $ [0, 1]$, and thus the total height of the building is normalized to unity \cite{HH}. The elevator receives a call from floor $X$, where random variable $X \sim f_X(x) = P(X=x) $, and the person or group of people entering on floor $X$ selects a destination floor by pushing the button to go to floor $Y$, where random variable $Y \sim f_Y(y) = P(Y=y) $.

\subsection{Further Assumptions} 
\begin{enumerate}
\item Each new entering group of people will only have one destination, and thus will only press the floor selection button once, implying there exists a bijective mapping of one single $ Y$ for each and every $X$. 
\item Random variables $X$ and $Y$ are I.I.D. with $f_X(x)=f_Y(x)$ and $X,Y \stackrel{I.I.D.}{\sim} f_X(x)$ 
\end{enumerate}
\hspace{1em}
(Note that these assumptions could indeed be relaxed for increased model complexity.)

\subsection{Solving this Model} 
\hspace{1em}
Suppose $n$ repeated i.i.d. draws are generated, where random vectors\\ 
\\
\centerline{ $ \vec{X}_{1:n}, \vec{Y}_{1:n} \stackrel{I.I.D.}{\sim} f_X(x) $,}\\
\par
and as such the “dumbwaiter” unintelligent elevator will visit each placed call in direct succession as:\\ 
\centerline{ $ X_1 \rightarrow Y_1 \rightarrow X_2 \rightarrow Y_2 \rightarrow X_3 \rightarrow Y_3 \rightarrow ... $}\\
\par
Without loss of generality since all data points were generated
I.I.D. by construction, we relabel the entire route’s trajectory information using $ \vec{X}_{1:2n} $, and thus we can focus on just the successive differences between each data point, as the status of whether it was a scheduled starting or destination call is irrelevant to our ultimate interest.
\par
Since the elevator travels at a constant velocity, the total travel time, $T$, must be proportional to the total traversed distance, $L$, written as $ T \propto L $. After normalizing the height of the building to unity, the aggregate total traversed distance of the elevator throughout its entire journey, must be directly proportional to the total incremental sum of each successive length along each leg of its entire journey, mathematically yielding the total sum of all absolute successive differences between each trajectory data point as mentioned previously, written as $L \propto \underset{k}{\sum} |X_{k+1} - X_k | $. Therefore, by finding the distribution of $L$, we can also understand the distribution of $T$. \\
\par 
Let's define a random variable $ Z_k \equiv X_{k+1}- X_k \rightarrow Z_k \sim f_Z(z) = P(Z=z) $, where \\
\\
\centerline{ $f_Z(z) = f_X(x)* f_X(-x)$,}\\
\par
where $*$ denotes convolution operation\cite{CO}, so $f_Z(z)$ is the auto-correlation of $f_X(x)$ with itself due to independence with increment of index $k$. 
\par
Let's define a random variable $R_k \equiv |Z_k|$. Clearly, the PDF of $R_k$ is the same as the PDF of $Z_k$ with all the negative half’s probability mass reflected back over to the positive side. 
\par
Given Laplace’s “principle of indifference” from Bayesian epistemology \cite{PI}, we assume a maximal
entropy distribution of $\vec{X}_{1:2n} \stackrel{I.I.D.}{\sim} U(0,1) $ standard uniform distribution. Subsequently it is clear to see that $R_k \sim P(R_k=r_k) = f_{R_k}(r_k) = 2 (1-r_k) I(0 \leq r_k \leq 1 )$ a right-triangle-shaped distribution, yielding $E[R_k] = \frac{1}{3}, Var(R_k) = \frac{1}{18} $.
\\
\begin{theorem}
Suppose that $R_k \sim P(R_k = r_k) = f_{R_k}(r_k) = 2 ( 1 -r_k) I (0 \leq r_k \leq 1)$.
The following are true:
\begin{enumerate}
    \item $E[R_k] = \frac{1}{3}$
    \item $Var(R_k) = \frac{1}{18}$
\end{enumerate}
\end{theorem}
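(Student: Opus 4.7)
The plan is to verify both claims by direct computation from the stated density, since the triangular density $f_{R_k}(r) = 2(1-r)\mathbf{1}_{[0,1]}(r)$ is elementary and integrates against polynomial moments in closed form. Before computing moments, I would briefly note that the density is non-negative on $[0,1]$ and that $\int_0^1 2(1-r)\,\d r = 1$, so $f_{R_k}$ is a bona fide probability density; this sanity check is a useful sign that the earlier derivation (where $R_k = |X_{k+1}-X_k|$ with $X$'s i.i.d.\ uniform on $[0,1]$, and the density is obtained by reflecting the symmetric triangular density of $Z_k$ onto $[0,1]$) has been set up consistently.

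Next, I would compute the first moment by expanding
\[
E[R_k] = \int_0^1 r \cdot 2(1-r)\,\d r = 2\int_0^1 (r-r^2)\,\d r,
\]
which gives $2\bigl(\tfrac12 - \tfrac13\bigr) = \tfrac13$, establishing part (1). Alternatively, one could invoke the well-known fact that $|U_1-U_2|$ for $U_1,U_2 \stackrel{\text{i.i.d.}}{\sim} U(0,1)$ has mean $1/3$, which matches the context from which this density was derived.

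For part (2), the plan is to compute $E[R_k^2]$ in the same way and then use $\operatorname{Var}(R_k) = E[R_k^2] - (E[R_k])^2$. Expanding
\[
E[R_k^2] = \int_0^1 r^2 \cdot 2(1-r)\,\d r = 2\bigl(\tfrac13 - \tfrac14\bigr) = \tfrac16,
\]
and subtracting $(1/3)^2 = 1/9$ yields $1/6 - 1/9 = 3/18 - 2/18 = 1/18$, as claimed.

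Since every step reduces to integrating a polynomial against the triangular density, I do not anticipate a genuine obstacle. The only thing worth being careful about is the arithmetic of the common denominator in the variance subtraction, which is where sign or fraction errors could creep in; apart from that, the proof is just two one-line integrals followed by a single subtraction.
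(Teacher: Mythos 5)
Your proposal is correct, and every computation checks out: $E[R_k]=2(\tfrac12-\tfrac13)=\tfrac13$, $E[R_k^2]=2(\tfrac13-\tfrac14)=\tfrac16$, and $\tfrac16-\tfrac19=\tfrac1{18}$. It is worth noting, however, that the paper's own ``proof'' of this theorem consists of a single sentence --- it observes that the distribution is triangular and declares that ``hence, the results are obtained'' --- so you have not merely matched the paper's argument but actually supplied the direct moment computation that the paper omits entirely. Your added sanity check that $\int_0^1 2(1-r)\,\d r = 1$ and the remark connecting the answer to the known mean of $|U_1-U_2|$ for i.i.d.\ uniforms are both reasonable touches that tie the density back to its derivation earlier in the paper. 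In short: your write-up is a complete and correct version of the calculation the paper only gestures at.
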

\begin{proof}
\hspace{1em}
\par
Suppose $R_k$ is defined as above. Consider that the distribution of $R_k$ is in the triangle shape. Hence, the results are obtained.
\end{proof}
\par
Note how $R_{k} \equiv |X_{k+1}-X_k| $, and $R_{k-1} \equiv |X_k-X_{k-1} |$, meaning that successive absolute differences $R_{k}$ and $R_{k-1}$ Are not actually completely independent, implying some weak correlation between directly neighboring legs of the elevator’s journey. That being said, since the transition probability parameters remain constant throughout time, $R_k$ is a stationary random process. 
\par
Indeed, we can actually compute the correlation of each successive increment as follows:
\\
\begin{theorem}
Suppose that $R_k \sim P(R_k = r_k) = f_{R_k}(r_k) = 2 ( 1 -r_k) I (0 \leq r_k \leq 1)$.
Then, $E[R_k R_{k-1}] = \frac{7}{60}$.
\end{theorem}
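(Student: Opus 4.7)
The plan is to exploit the fact that $R_k$ and $R_{k-1}$ share exactly one of the three underlying uniform random variables, namely $X_k$, while the other two endpoints $X_{k-1}$ and $X_{k+1}$ are independent of each other and of $X_k$. So I would condition on $X_k = x$ and write
\begin{equation*}
E[R_k R_{k-1}] = E\bigl[\, E[|X_{k+1}-X_k|\cdot|X_k-X_{k-1}| \mid X_k]\,\bigr],
\end{equation*}
and then use conditional independence to factor the inner expectation as $g(X_k)^2$, where
\begin{equation*}
g(x) \;\equiv\; E\bigl[|U-x|\bigr], \qquad U \sim U(0,1).
\end{equation*}

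Next, I would compute $g$ in closed form by splitting the integral at $u=x$:
\begin{equation*}
g(x) = \int_0^x (x-u)\,\d u + \int_x^1 (u-x)\,\d u = \tfrac{x^2}{2} + \tfrac{(1-x)^2}{2} = x^2 - x + \tfrac{1}{2}.
\end{equation*}
Because $X_k \sim U(0,1)$ by the standing assumption, the remaining step is the elementary integral
\begin{equation*}
E[R_k R_{k-1}] = \int_0^1 g(x)^2\,\d x = \int_0^1 \Bigl(x^2 - x + \tfrac{1}{2}\Bigr)^2 \d x,
\end{equation*}
which I would expand and integrate term by term (a pure polynomial computation) to land on $\tfrac{7}{60}$.

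There is no real obstacle here; the only subtlety worth flagging explicitly is the conditional-independence step, since the statement preceding the theorem emphasizes that $R_k$ and $R_{k-1}$ are \emph{not} independent. The point is that they become independent once the shared endpoint $X_k$ is fixed, which is precisely what lets the double expectation factor. Everything afterward is bookkeeping: expand $(x^2 - x + \tfrac12)^2 = x^4 - 2x^3 + 2x^2 - x + \tfrac14$, integrate, and collect the common denominator $60$ to verify $\tfrac{1}{5} - \tfrac{1}{2} + \tfrac{2}{3} - \tfrac{1}{2} + \tfrac{1}{4} = \tfrac{7}{60}$.
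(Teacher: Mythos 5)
Your proposal is correct and follows essentially the same route as the paper: condition on the shared endpoint $X_k$, use conditional independence to factor the inner expectation into $E[|U-x|]^2 = \left(\frac{x^2+(1-x)^2}{2}\right)^2$, and integrate the resulting polynomial over $[0,1]$ to obtain $\frac{7}{60}$. Your explicit flagging of the conditional-independence step is a welcome clarification of a point the paper's proof uses implicitly.
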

\begin{proof}
Consider that
 \begin{align*} 
E[R_k R_{k-1} ] &= E[|X_{k+1}-X_k| | X_k-X_{k-1}|] \\
&= E[|X_k-X_{k+1}| | X_k-X_{k-1}|] \\
&= E \left[  E[ |X_k-X_{k+1}| | X_k-X_{k-1}|] \big{|} X_k = x_k \right]
\end{align*}
\\
By law of iterated expectations,
\begin{align*}
\hspace{2.5em}
&\left[ |X_k-X_{k+1} | \big{|} X_k=x_k \right] ,\left[ |X_k-X_{k-1} | \big{|} X_k=x_k \right]  \\
&= \left[|x_k - X_{k+1}|, | x_k - X_{k-1} | \right] \stackrel{I.I.D.}{\sim} f_{X_{k \pm 1} \big| X_k = x_k}(x_{k \pm 1}) \equiv P(X_{k \pm 1}= x_{k \pm 1} \big{|} X_k=x_k) \\
&= I(0 \leq x_{k \pm 1} \leq x_k) + I(0 \leq x_{k \pm 1} \leq 1 - x_k ) 
\end{align*}

\begin{align*} 
E[ |X_k-X_{k+1}| | X_k-X_{k-1}| \big{|} X_k = x_k ] &= E[ |x_k-X_{k+1}| | x_k-X_{k-1}| ] \\
&= E[ |x_k-X_{k+1}| ] E[ | x_k-X_{k-1}| ]  \\
&= E[ |x_k-X_{k+1}| ]^2 \quad \text{(by symmetry)}   \\
&= \left[ x_k \frac{x_k}{2} + (1-x_k) \frac{(1-x_k)}{2} \right]^2 \\
&= \left[ \frac{1}{2} \left[x_k^2 +(1-x_k)^2 \right] \right]^2 \\
&= x_k^4 -2 x_k^3 + 2x_k^2 -x_k + \frac{1}{4} 
\end{align*}

\begin{align*} 
\hspace{1.5em}
E \left[ E[ |X_k-X_{k+1}| | X_k-X_{k-1}| \big{|} X_k = x_k ] \right] &= \stackrel{1}{\underset{0}{\int}} x_k^4 -2 x_k^3 + 2x_k^2 -x_k + \frac{1}{4} d x_k  \\
&= \frac{7}{60}  \\
&= E[R_k R_{k-1} ]
\end{align*}
\end{proof}

Thus, the auto-covariance and auto-correlation can be obtained.
\\
\begin{corollary}
\begin{align*}
    Cov(R_k, R_{k-1}) &= \frac{1}{180} \\
    Corr(R_k, R_{k-1}) &= \frac{1}{10}
\end{align*}
\end{corollary}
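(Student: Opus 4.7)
The plan is to combine the two preceding theorems with the standard definitions of covariance and correlation; no new probabilistic content is needed, since stationarity of $R_k$ has already been noted and the mixed moment $E[R_k R_{k-1}] = \tfrac{7}{60}$ has just been established.

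First I would invoke the covariance identity
\[
\mathrm{Cov}(R_k, R_{k-1}) \;=\; E[R_k R_{k-1}] \;-\; E[R_k]\,E[R_{k-1}].
\]
By stationarity of the process $(R_k)$, $E[R_{k-1}] = E[R_k] = \tfrac{1}{3}$, so substituting the values from the two preceding theorems yields
\[
\mathrm{Cov}(R_k, R_{k-1}) \;=\; \frac{7}{60} - \frac{1}{3}\cdot\frac{1}{3} \;=\; \frac{21}{180} - \frac{20}{180} \;=\; \frac{1}{180}.
\]

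Next I would apply the definition of the correlation coefficient, again using stationarity to equate $\mathrm{Var}(R_{k-1})$ with $\mathrm{Var}(R_k) = \tfrac{1}{18}$:
\[
\mathrm{Corr}(R_k, R_{k-1}) \;=\; \frac{\mathrm{Cov}(R_k,R_{k-1})}{\sqrt{\mathrm{Var}(R_k)\,\mathrm{Var}(R_{k-1})}} \;=\; \frac{1/180}{1/18} \;=\; \frac{1}{10}.
\]

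There is essentially no obstacle to overcome here: the corollary is a direct arithmetic consequence of the two previous theorems. The only subtlety worth flagging explicitly in the write-up is the appeal to stationarity when replacing $E[R_{k-1}]$ and $\mathrm{Var}(R_{k-1})$ by their time-$k$ counterparts, which is justified by the remark preceding Theorem 5.2 that the transition probabilities are constant in $k$.
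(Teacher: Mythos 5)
Your proposal is correct and follows essentially the same route as the paper: both compute $\mathrm{Cov}(R_k,R_{k-1}) = \frac{7}{60} - \frac{1}{9} = \frac{1}{180}$ from the covariance identity and then divide by $\sqrt{\mathrm{Var}(R_k)\,\mathrm{Var}(R_{k-1})} = \frac{1}{18}$ to obtain $\frac{1}{10}$. Your explicit remark about stationarity justifying $E[R_{k-1}] = E[R_k]$ and $\mathrm{Var}(R_{k-1}) = \mathrm{Var}(R_k)$ is a small clarification the paper leaves implicit.
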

\begin{proof}
\begin{align*}
Cov(R_k, R_{k-1}) \equiv E[R_k R_{k-1}] - E[R_k] E[R_{k-1}] &= \frac{7}{60} - \frac{1}{3} \times \frac{1}{3} \\
&= \frac{7}{60} - \frac{1}{9}  \\
&= \frac{1}{180}\\
Corr(R_k, R_{k-1}) \equiv \frac{Cov(R_k, R_{k-1})}{\sqrt{Var(R_k) \times Var(R_{k-1}) }} &= \frac{\left[ \frac{1}{180} \right] }{\sqrt{ \left[ \frac{1}{18} \right] \times \left[ \frac{1}{18} \right]}}  \\
&= \frac{\left[\frac{1}{180} \right]}{ \left[\frac{1}{18} \right]}\\
&=  \frac{1}{10} \\
&= 0.1 
\end{align*}
\end{proof}

Notice that $R_{k+1} \equiv |X_{k+2} - X_{k+1}| $ and $ R_{k-1} \equiv |X_{k} - X_{k-1}| $, implying $R_{k-1}$ and $R_{k+1}$ are completely independent of each other. Therefore, we have an $m$-dependent sequence for $m=2$ in this case, indicating a process exhibiting the Strong Markov Property, where intuitively a time-neighboring index separation of $m=2$ “refreshes” this process to start anew in sense. So, with increasingly large index $k$, we can aptly invoke the Law of Large Numbers, Central Limit Theorem, and Berry-Esseen-type Bounds on the empirically observed sample mean of the stochastic process of $R_k$ to deduce asymptotic measurements and statistical properties to characterize the elevator’s route, hopefully yielding insightful intuition for improvement upon this unintelligent “dumbwaiter” model. 

\subsection{Apply to the Real-World}
\hspace{1em}
This implies that on average over long time periods, this “dumbwaiter“ baseline model should asymptotically traverse $ \frac{1}{3}$ of the height of the building per call, with a variance of $ \frac{1}{18} $ fluctuation around this expected value. Since we have $T \propto L $, and $L = T \times $ velocity, if the elevator is traveling at a speed of $45 $ meters per minute\cite{PME}, we can calculate the average distance the elevator will need to travel per call. Suppose we are looking at a 10-story building, with each floor being length $4.2$ meters\cite{SD}, then the whole building will have a total height of $42$ meters. On average, each elevator call will make the elevator travel about $14$ meters. Then, we can calculate that $ T= \frac{L}{velocity} = \frac{14}{45} \approx 0.3$ minute $ = 18 $ seconds. Thus, for each call, the elevator will on average have to spend approximately 18 seconds to complete its service.
\par
Clearly, this model shows improvement over the case involving the elevator returning back to the ground floor between customers. Indeed, there is certainly a way for further improvement on this model; as long as this “dumbwaiter” beforehand more intelligently plans ahead its journey and visitation schedule, the elevator will waste less time, and thus intelligent methods implementing information acquisition will allow the elevator to travel less distance to serve more people efficiently. \\

Furthermore, it is impossible to further improve:

\begin{lemma}
Lemma: There exists a slightly intelligent waitress who can only beat and improve upon the dumbwaiter.

\end{lemma}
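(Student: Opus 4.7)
The plan is to exhibit an explicit ``slightly intelligent'' waitress policy $W$ and prove it weakly dominates the dumbwaiter pathwise while strictly improving it in expectation. Combined with Theorem 5.1, this will push the asymptotic per-call traversal under $W$ strictly below $1/3$ and thereby establish the existence claim in the lemma.

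First I would take $W$ to be the minimal reordering refinement of the dumbwaiter: given the pending calls already observed at a decision point, $W$ services them via the shortest feasible route that respects the precedence constraint ``pickup $X_i$ before drop-off $Y_i$ for each group $i$.'' Assumption 3 of Section 3 (unlimited capacity) makes it legal for $W$ to carry several groups simultaneously. Because the dumbwaiter's fixed FIFO route $X_1 \to Y_1 \to X_2 \to Y_2 \to \cdots$ is itself one of the feasible orderings, we immediately obtain the pathwise inequality
\begin{equation*}
L_W \le L_{\mathrm{DW}}
\end{equation*}
on every realization of $\{(X_i, Y_i)\}$, which handles the ``can only beat'' half of the lemma.

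Next I would produce strict improvement by exhibiting a positive-probability event on which this inequality is strict. On $\{X_1 < X_2 < Y_1 < Y_2\}$ --- which has probability $1/24$ under the IID uniform assumption for $(X_j, Y_j)$ --- the reordered route $X_1 \to X_2 \to Y_1 \to Y_2$ traverses a total distance of $Y_2 - X_1$, whereas the dumbwaiter traverses $(Y_1 - X_1) + (Y_1 - X_2) + (Y_2 - X_2) = 2Y_1 - 2X_2 + Y_2 - X_1$, giving a strictly positive saving of $2(Y_1 - X_2)$. A direct integration over $[0,1]^4$, invoking Laplace's principle of indifference exactly as in the baseline model, turns this into a strictly positive expected per-batch gap; together with the pathwise bound it forces the asymptotic per-call traversal of $W$ strictly below the dumbwaiter's $1/3$.

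The main obstacle is interpretive rather than computational: the lemma is stated informally, so the crucial step is pinning down a policy that is modestly smarter than the dumbwaiter yet strictly weaker than the more sophisticated schemes later sections will contrast against --- here, nothing beyond feasible reordering of a small batch of already-observed calls. Once $W$ is fixed in this minimal form, the triangle-inequality coupling and the positive-probability event above reduce the remaining work to elementary routing arithmetic plus the uniform-distribution integrals already carried out in Theorem 5.1 and its corollary, with no new probabilistic machinery required.
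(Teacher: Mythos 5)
Your proposal is correct, but it takes a genuinely different route from the paper. The paper's own justification works on the relabeled, undifferentiated sequence of stops: it takes three consecutive points $X_a \to X_b \to X_c$, lets the waitress collect all three calls and sweep from $\min$ to $\max$, and invokes the triangle inequality to conclude that the range $\max[X_a,X_b,X_c]-\min[X_a,X_b,X_c]$ cannot exceed $|X_c-X_b|+|X_b-X_a|$; it stops at weak domination and never addresses whether such a sweep is physically feasible once one remembers which stops are pickups and which are drop-offs. You instead keep the pickup/drop-off pairing, define the waitress as the minimizer over precedence-respecting reorderings of a small batch, get the pathwise bound $L_W \le L_{\mathrm{DW}}$ for free because the FIFO route is in the feasible set, and then exhibit the positive-probability event $\{X_1 < X_2 < Y_1 < Y_2\}$ with a strictly positive saving of $2(Y_1 - X_2)$ to upgrade weak domination to strict improvement in expectation (your arithmetic and the $1/24$ probability both check out). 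Your version buys two things the paper's does not: it honors the constraint that a passenger must be picked up before being delivered, which the paper's range argument silently discards, and it actually delivers the ``improve upon'' half of the lemma rather than only ``cannot be worse.'' The paper's version buys brevity and a cleaner one-line appeal to the triangle inequality. The only soft spot in your write-up is the final asymptotic step: passing from a strictly positive expected per-batch gap to a strictly sub-$1/3$ asymptotic per-call traversal needs a word about the renewal/batching structure (independent pairs plus the law of large numbers), but that is no less rigorous than anything in the paper's own treatment.
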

Enormously simple mathematical justification:
\par
Suppose the dumbwaiter visits $ X_a \rightarrow X_{b} \rightarrow X_{c} $ in direct succession without altering the order. The slightly intelligent waitress knows from her classic secretary problem training to wait and receive calls $X_a, X_{b}, X_{c} $ before performing the elevator-routing-problem.
\par
The dumbwaiter's length of path or time of journey will $ \propto | X_{c} - X_{b} | + |X_{b} -X_a | $.
The slightly intelligent waitress's length of path or time of journey will be proportional to:
\begin{align*}
\max{[X_a, X_b, X_c]} - \min{[X_a, X_b, X_c]} &= range[X_a, X_b, X_c] \\
&= \max{[ |X_a-X_{b}| , |X_{b}-X_{c} |, |X_c-X_a|]}
\end{align*}
which by triangle inequality, cannot be worse than the dumbwaiter's plan.
\par
The amount of improvement in the slightly intelligent waitress's procedure versus the dumbwaiter could potentially be estimated using the order statistics of the samples if the model implemented assumes independence, which would require the original I.I.D. sample's CDF.

\section{Gibbs Random Field Approach}
\hspace{1em}
Gibbs Random Field Approach is the groundwork for the deep reinforcement machine learning stuff, but the notion of implementing decision nodes for transitioning between global ensemble states serves as the theoretical framework and construct for how to optimize and tell the elevator what to do and learn from repeated experience, etc\cite{GR}. Hopefully, this will outperform the previous “dumbwaiter” and get the food to the bourgeoisie upstairs on time.

In short, we use discrete math to define a Markov Chain random walk on a Markov Graphical Model:
\par
All possible permutations, configurations, or arrangements of the aggregate system canonical ensemble of elevators and humans are represented in the state space or statistical mechanics phase space. We assume no degeneracy of states' energy levels, implying Fermi-Dirac counting. We describe the total current state of the system and fully represent all information about configuration entropy with a binary vector of indicator functions of occupancy. In essence, elevator motion and delivery will cause the transport of people from one location to another, causing the aggregate system to jump from one overall state to another. This calls for modeling the behavior of a Markov Chain random walk on a graph, meaning each time the elevator makes a trip to deliver people, we have one time unit of incremental dynamic progression on the Markov Chain, and the type of delivery will dictate how the system hops from one state to another. It will become a reinforcement learning problem to figure out the optimal way to select which states to jump to and from to steer the trajectory. To create a well-formulated optimization problem that's not ill-posed, we implement a directed acyclic weighted graph (an irreducible and periodic Markov chain) including a marginal objective penalization factor based on choice of route. Clearly the amount of penalization must be loosely commensurate to various physical factors and practical considerations, such as aggregate womyxn-man-hour wait time, opportunity cost, node affinity, number of door-opening stops, arc length of an elevator's parameterized curve trajectory, or geometric geodesic spatial configuration distance between the stops or floors within the context of the global system's aggregate states, analogous to graphical Laplacian implementations using the spatial arrangement of finite element meshes. 
\par
In this paper, we introduce a discrete-time Markov Chain to model elevator (or dumbwaiter) control. With this framework, the elevator is allowed to move randomly with respect to a pre-specified probability measure or state-transition probability of the stochastic process. This generalized non-deterministic process allows us the flexibility to design elevator control policies. Furthermore, because of the simplicity of the Markov Chain model, we can evaluate the model analytically. This analytic and quantitative evaluation system allows us to assess and compare the efficiency of different models; hence, we can see the model as an optimization problem.
In this paper, we provide two baseline models with some model evaluation in order to illustrate the efficiency of the framework.

We set the sample building to have $N$ floors, for $N \in \mathbb{N}$.

\subsection{Notations}
\begin{enumerate}
    \item $\{X_t, t \in \mathbb{N}\}$ : A discrete-time stochastic process, which is Markov Chain in the baseline model.
    \item $\{Y_t, t \in \mathbb{N}\}$ : A discrete-time stochastic process, which is Markov Chain in the Floor Waiting Time model.
    \item $\tau_i$ : The first hitting of the Markov chain to some state $i$.
\end{enumerate}
\subsection{Baseline model}
\subsubsection{Additional Assumptions}
\begin{enumerate}
    \item The building has $N$ floors, for $N \in \mathbb{N}$.
\end{enumerate}
\hspace{1em}
Consider a Markov Chain,$X_t$, $t \in \mathbb{Z}^+$, representing a floor that the elevator visits. In this case, we have N states for this chain, since the building has $N$ floors.
\par
Moreover, we define the transition probability from state $a$ to $b$ (eg. elevator moving from floor $3$ to $4$) by $P_{a,b} = P(X_{t+1} = b|X_t = a)$. The following properties of chains are observed:
\begin{enumerate}
    \item The Markov chain has reflective boundary condition (the boundaries are $1$ and $N$). In other words, the elevator cannot go down below the first floor or fly over the $N$th floor.
    \item Suppose $x \in {2,3,4,..., N-1}$. We have $P(X_{t+1} = x+1|X_t = x) > 0$, $P(X_{t+1} = x-1|X_t = x) > 0$, and $P(X_{t+1} = x+1|X_t = x) + P(X_{t+1} = x-1|X_t = x) + P(X_{t+1} = x|X_t = x)  = 1.$ That is, in each time step, the elevator can only move to floors connected to the floor it is on.
\end{enumerate}

To make the Markov Chain irreducible, we define the transition probabilities to the connected floor as strictly non-zero.

Given any transition probability constraint to the conditions above and given an initial state (the Markov Chain current floor), we can calculate the expected first hitting time of the Markov, which is the expected time that the elevator will move to some specific floor for the first time. We illustrate this point in the next model.

\subsection{A Model Pertained To Floor Waiting Time}
\subsubsection{Additional Assumptions}
\begin{enumerate}
    \item The elevator can hold an infinite number of people.
    \item Passengers leave the elevator immediately as the elevator goes past the target floor. 
    \item The time taken by each passenger to get in or get out of the elevator is negligible.
    \item Passengers are only aware of the time they wait for the elevator to arrive.
\end{enumerate}
\hspace{1em}
The chain defined in the previous subsection does not account for the time people on each floor waiting the elevator to arrive.
\par
In order to cope with such problem, we introduce a new set of states to the model. 

Consider that the elevator cannot operate on multiple floors at the same time. However, it is possible for people on each floor to wait for such an elevator at the same time. Suppose that the probability that a person on the $i$th floor would push a calling button at each time step is $p_i$. Since we have to consider both the floor the elevator is on and floors where people are waiting, we separate the state of the markov chain into two different sub-states:

\begin{enumerate}
    \item The floor the elevator is on, c.\\
    This sub-state takes $1,2,3,...,N$ representing the current floor the elevator is on.
    \item Floors people are waiting for the elevator.\\
    We define $w \in {0,1}^N$, with $w_i$ representing whether or not people are waiting on the floor $i$th. To illustrate, consider $N = 3$ and $w = (1,0,1)$, this means that people are waiting on the first and the third floors.
\end{enumerate}
\hspace{1em}
Combing two sub-state together, we can define $(c,w)$ to be a state for the Markov chain. Note that for an $N$ floor building, the model takes $N2^N$ states. Despite its exponential complexity, this model works in real-world practice as many buildings do not have too many floor.
\par
Remark that the properties and assumptions pertaining to the elevator movement suggested in the previous section remain in this model. However, there are some other conditions we propose in this  model:
\begin{enumerate}
    \item Let $P(Y_t = (c_t, w_t)) = 0$ such that $c_t = c$ and $(w_t)_c = 1$. In other word, people cannot push a call button if the elevator is on such a floor.
    \item As we have just assumed, a person on each floor would put the button independently. We have the probability that people on $i_1, i_2,...,i_m$  floor would put the button, given that none of the buttons on such floor is push, is $p_{i_1} \cdot p_{i_2} \cdot ... \cdot p_{i_m}$.
\end{enumerate}
\hspace{1em}
The usefulness of this model is that we can evaluate the expected waiting time of people on each floor. \\

Before jumping into the evaluation, these are additional suppositions to be satisfied in order to apply the model:

Consider the states $s_c = (c, 0,0,0,...,0)$, $c \in \{1,2,3,...,c\}$. It is a state that the elevator is on floor $c$ and there are no people waiting on any floor. Supposedly, the elevator works fast enough so that there are some points at which there are no people waiting for it; otherwise, people are less likely to wait for an elevator. Assume that the elevator is at some state, let $\tau_i$ be the first hitting time to the state that the elevator is on $i$th floor and there is no people waits on any floor.
\par
Then, the optimization problem is to minimize $E[\sum_i \tau_i]$. Formally,
$\min_{P} \{E[\sum_i \tau_i]\}$. In other words, we have to find the transition probability such that the first hit from some floor to every floor is minimized. The transition probability matrix can be created using analytical modelling or heuristic algorithms such as the Genetic Algorithm. This framework provides plausible methods to evaluate such a matrix.

\section{Multi-Dumbwaiter System}

\begin{definition}
A “Multi-Dumbwaiter System” is a system of multiple Dumbwaiter systems, for which contains multiple elevators in one building system. Elevators still function as the only transport media, with limited human capacity $n$, and exhibit the worst case handling of transporting human beings between floors of a certain building. 

\end{definition}

Recall the three assumptions set up earlier:
\begin{enumerate}
\item The model is restrict to one elevator in one building.
\item People only use elevators for moving through different floors in the building.
\item The elevator has unlimited human capacity.
\end{enumerate}

Now, with both the Spatial Process Approach and the Gibbs Random Field Approach completed, we are able to loosen the restrictions of the first and third assumptions:

\begin{enumerate}
\item The model includes $m$ number of elevators in one building.
\item People only use elevators for moving through different floors in the building.
\item Each elevator has a limited human capacity of $n$.
\end{enumerate}

Under some circumstances:

\begin{theorem}

We can equally distribute over-populated passengers into the $m$ different number of elevators so that each elevator inside this “Multi-Dumbwaiter System” system forms a “Dumbwaiter System”, depending on the total number of passengers $A$, the elevator capacity $n$, and the number of elevators $m$.

\end{theorem}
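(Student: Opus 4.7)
The plan is to treat this as a combinatorial pigeonhole-style distribution argument, followed by invoking the single-elevator Dumbwaiter analysis developed in the Spatial Process section on each of the $m$ resulting subsystems. The statement essentially says that provided the total demand does not exceed aggregate capacity, the ``Multi-Dumbwaiter'' splits cleanly into $m$ parallel ``Dumbwaiters.'' So the first step is to pin down the feasibility condition: I would argue that we must require $A \leq mn$ (otherwise some passenger has no seat, and no equal distribution can make each elevator a valid Dumbwaiter under the capacity-$n$ constraint). This is both necessary and sufficient, and it gives the dependence on $A$, $n$, and $m$ that the theorem alludes to.

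Next I would carry out the explicit allocation. Write $A = qm + r$ with $0 \leq r < m$ by the division algorithm. Assign $q+1$ passengers to each of $r$ chosen elevators and $q$ passengers to each of the remaining $m-r$ elevators. To confirm this respects the capacity bound, note that $A \leq mn$ implies $q = \lfloor A/m \rfloor \leq n$, and moreover $q+1 \leq n$ whenever $r > 0$ (since otherwise $A = qm + r > mn$, a contradiction). Hence every elevator receives at most $n$ passengers, so each elevator operates within the ``limited capacity $n$'' assumption of the Multi-Dumbwaiter definition.

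The third step is to verify that once the split is made, each elevator individually satisfies the three defining properties of a Dumbwaiter System listed in the original definition: it is the sole transport medium for its assigned passengers, it does not revisit a floor of origin, and—within its assigned subset—it behaves exactly like the single unintelligent elevator analyzed earlier. Since the passengers have been pre-partitioned, each elevator sees an independent I.I.D.\ stream of $(X_i, Y_i)$ calls drawn from the same $f_X$, so the Spatial Process baseline analysis (in particular Theorem~5.1 and its corollary on expected traversal $\tfrac{1}{3}$ per call) applies verbatim to each of the $m$ elevators in parallel.

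The main obstacle, and the only delicate point, will be formulating the ``equally distribute'' clause precisely when $m \nmid A$, since perfect equality is impossible in that case. I would resolve this by adopting the near-equal split $\{q, q+1\}$ above and observing that this is the unique distribution (up to relabeling elevators) minimizing the maximum load, which is the natural combinatorial notion of ``equal.'' A secondary subtlety is that the theorem's phrase ``under some circumstances'' must be interpreted as exactly the feasibility condition $A \leq mn$; I would state this explicitly at the outset so that the conclusion holds without further qualification. Beyond these framing choices, the argument reduces to the division algorithm plus an appeal to previously established results, and no further nontrivial calculation is required.
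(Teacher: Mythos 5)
The paper itself offers no proof of this theorem at all: it is stated as a bare assertion (``Under some circumstances: \dots'') with no \verb|proof| environment and no supporting argument, so there is nothing in the source to compare your reasoning against line by line. Your proposal therefore goes well beyond the paper. The feasibility condition $A \leq mn$, the division-algorithm split $A = qm + r$ with loads $q$ and $q+1$, and the capacity check ($q \leq n$ always, and $q+1 \leq n$ whenever $r > 0$) are all arithmetically correct, and making ``under some circumstances'' precise as $A \leq mn$ is exactly the kind of clarification the theorem statement needs. Two caveats are worth flagging. First, the paper's Definition of a ``Dumbwaiter System'' stipulates \emph{unlimited} human capacity, whereas each elevator in the Multi-Dumbwaiter System has capacity $n$; your argument shows each elevator's assigned load fits within $n$, but you should say explicitly that this makes the capacity constraint non-binding, so that each sub-elevator is a Dumbwaiter System ``for all practical purposes'' in the paper's sense. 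Second, your feasibility check is a static head-count, while the spatial-process analysis you invoke concerns a stream of calls arriving over time; the claim that each elevator sees an independent I.I.D.\ stream from the same $f_X$ needs the mild additional assumption that passengers are assigned to elevators independently of their origin and destination floors. With those two points made explicit, your argument is a complete and defensible proof of a statement the paper never actually proves.
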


As such, we are able to achieve a preliminary simplification to a realistic, and thus more complicated “Multi-Dumbwaiter” elevator optimization.

\section{Summary}
\hspace{1em}
In this paper, we study the elevator optimization problem via dumbwaiter modeling with two approaches: Spatial Process and Gibbs Random Field (Markov Chain).
\par
In the Spatial Process approach, we consider a dumbwaiter movement in continuous
space. By analyzing the distribution of the absolute differences between the elevator's adjacent movements, we expect that for each elevator call from a group of people or a single person, the elevator will need to travel $\frac{1}{3}$ the height of the building to complete its job. This means, in the real world, we expect this elevator to spend roughly 18 seconds completing each of its services on average.
\par
On the other hand, in the Gibbs Random Field approach, we introduce a framework which allows us to flexibly design a stochastic elevator control policy and analytically evaluate such a policy. We also give a model that pertains to the time passengers have to wait for the elevator to arrive at their floor.

\par
In addition, we defined “Multi-Dumbwaiter System” and some corresponding theory strategies to aid in the processing of buildings with multiple floors.
A follow-up project will be combining Reinforcement Q Learning and simulation results with the two dumbwaiter model approaches to optimize elevator usage for more complicated situations.

\section{Acknowledgement}

\hspace{1em}
We thank Dr. Zhen-Qing Chen\footnote{Professor, Department of Mathematics, University of Washington, Seattle}, Dr. Soumik Pal\footnote{Professor, Department of Mathematics, University of Washington, Seattle}, and Dr. Kirill V. Golubnichiy \footnote{Postdoctoral Researcher, Department of Mathematics and Statistics, University of Calgary}for comments that greatly improved the manuscript.

We also acknowledge the help, from the authors of the additional paper “Application of Deep Q learning with Simulation Results for Elevator Optimization”, have serviced in creating further applications based on the math approaches mentioned in this paper: Zheng Cao, Raymond Guo, Caesar M. Tuguinay, Mark Pock, Jiayi Gao, and Ziyu Wang.


\newpage

\end{document}